\theoremstyle{definition}{
\newtheorem{Def}{{\rm Definition}}
\newtheorem{Ex}{{\rm Example}}
\newtheorem{Rem}{{\rm Remark}}
\newtheorem*{Prob}{{\rm Problem}}
\newtheorem*{MainProb}{Main Problem}
}
\theoremstyle{plain}
{

\newtheorem{Thm}{Theorem}

}
\begin{document}
\title[Good differentiable maps locally like natural maps to Reeb graphs]{Maps on manifolds onto graphs locally regarded as the quotient maps onto Reeb spaces of some differentiable maps and a new construction problem}

\author{Naoki Kitazawa}
\keywords{Singularities of differentiable maps. Morse functions and fold maps. Differential topology. Reeb spaces. Reeb graphs. \\
\indent {\it \textup{2020} Mathematics Subject Classification}: Primary~57R45, ~58C05. Secondary~57R19.}
\address{Institute of Mathematics for Industry, Kyushu University, 744 Motooka, Nishi-ku Fukuoka 819-0395, Japan}
\email{n-kitazawa@imi.kyushu-u.ac.jp}
\maketitle
\begin{abstract}
The {\it Reeb space} of a function or a map on a manifold is defined as the space of all connected components of preimages and represents the manifold compactly. In fact, Reeb spaces are fundamental and useful tools in geometric theory of so-called {\it Morse} functions and more general maps which are sufficiently tame. 

Can we construct an explicit good function inducing a given graph as the Reeb space ({\it Reeb graph})? These problems were launched by Sharko in 2000s and have been explicitly solved by several researchers. 
As related pioneering studies, the author also found and solved problems adding constraints on singularities and preimages for example. 

The present paper concerns new problems on these works. We define the classes of maps onto graphs locally regarded as ones onto the Reeb spaces induced from smooth functions of suitable classes and consider and challenge the problems for the classes. 
\end{abstract}

\section{Introduction.}
\label{sec:1}
\subsection{Reeb spaces and graphs and differentiable functions realizing given graphs as Reeb graphs}
The {\it Reeb space} of a continuous map of a suitable class on a topological space is the space of all connected components of preimages. For a differentiable function, consider the set of all points in the Reeb space coinciding with the set of all connected components of preimages containing some {\it singular} points: a {\it singular point} of a differentiable map is a point at which the rank of the differential is smaller than both the dimensions of the manifolds of the domain and the target. For {\it so-called} Morse functions, functions with finitely many singular points on closed manifolds, and more general functions of several suitable classes, the spaces are graphs such that the sets of all the vertices (the {\it vertex set}) are the sets defined before. They are the {\it Reeb graphs} of the maps. See \cite{reeb} and \cite{sharko} for example.

Reeb graphs and spaces are fundamental and important in the algebraic topological or differential topological theory of Morse functions and their variants.

We introduce several terminologies and a problem on construction of good (differentiable) functions inducing Reeb graphs isomorphic to given graphs.

The {\it singular set} of a differentiable map is the set of all singular points of the map. A {\it singular value} is a point in the manifold of the target such that the preimage contains some singular points. A {\it regular value} is a point in the manifold of the target of the map which is not a singular value. The {\it singular value set} is the image of the singular set of the map, or equivalently, the set of all singular values of it.

\begin{Prob}
\label{prob}
Can we construct a differentiable function with good geometric properties inducing a given graph as the Reeb graph? 
\end{Prob}
Note that for example, we do not fix a manifold on which we construct a desired function.

A problem of this type was first considered and explicitly solved by Sharko (\cite{sharko}). \cite{batistacostamezasarmiento}, \cite{martinezalfaromezasarmientooliveira}, \cite{masumotosaeki} and \cite{michalak} are important studies related to this. 

Later the author set and solved explicit cases in \cite{kitazawa4} and \cite{kitazawa5} and \cite{saeki4} is also regarded as a paper motivated by them. Different from the other existing studies, conditions on preimages of regular values are posed and manifolds appearing there may not be spheres, for example. 

\subsection{Pseudo quotient maps.}
A {\it pseudo quotient} map on a differentiable manifold is a surjective continuous map onto a lower dimensional polyhedron, and defined as a map locally regarded as the natural quotient map onto the Reeb space of a differentiable map of a suitable class. They were first defined by Kobayashi and Saeki in 1996 (\cite{kobayashisaeki}) as useful objects in the theory of so-called {\it stable} maps and {\it generic} smooth maps from manifolds whose dimensions are greater than $2$ into the plane. Later the author used these objects in new explicit situations starting from redefining them in \cite{kitazawa2} and \cite{kitazawa3} for example.

\subsection{The content of the present paper.}
In the present paper, we discuss the following. 
\begin{itemize}
\item First we consider suitable classes of continuous or differentiable maps on differentiable manifolds (with subsets of the manifolds of the domains). We introduce the {\it Reeb graphs} of these maps. This gives a refinement of the definition of a Reeb graph which has not ever appeared. Remark \ref{rem:2} presents the reason we introduce such notions.
\item Second we redefine {\it pseudo quotient} maps on differentiable manifolds of a class of maps just before. 
\item We propose a variant of our explicit construction problem (Problem) before as Main Problem and give an answer as a main theorem (Theorem \ref{thm:1}) with several terminologies, notions, and notation needed. This is a problem of a new type and our answer is also a result of a new type. Theorem \ref{thm:2} also presents a related answer and another main theorem.
\end{itemize}
\begin{MainProb}
\label{mainprob}
Can we construct a pseudo quotient map having good geometric properties onto a given graph? Moreover, is this map essentially the quotient map onto the Reeb graph induced from a smooth function of a natural class.
\end{MainProb}


 

\section{Classes of continuous or differentiable maps on differentiable manifolds and the Reeb graphs of the maps of these classes.}
\begin{Def}
\label{def:1}
For a $C^r$ manifold $X$, a subset $A \subset X$ is a {\it measure zero set} if for a family $\{(U_{\lambda} \subset X,{\phi}_{\lambda}:U_{\lambda} \rightarrow {\mathbb{R}}^{\dim X})\}_{\lambda \in \Lambda}$ of local coordinates satisfying $A= {\bigcup}_{\lambda \in \Lambda} U_{\lambda}$ and compatible with the $C^{r}$ differentiable structure, ${\phi}_{\lambda}(A \bigcap U_{\lambda}) \subset {\mathbb{R}}^{\dim X}$ is a Lebesgue measurable set and the Lebesgue measure is $0$ for every $\lambda \in \Lambda$. 
\end{Def}
\begin{Def}
\label{def:2}
A {\it graph} $K$ is an object represented as a pair of the set $V$ (the {\it vertex set}) and the set $E$ (the {\it edge set}) consisting of pairs of subsets of $V$ whose sizes are $1$ or $2$ and elements of a non-empty set $S_K$.
An edge $e=(V_e \subset V,s_k \in S_K)$ is a {\it loop} if the size of the subset $V_e \subset V$ is $1$. 
The graph $K=(V,E)$ is {\it finite} if both the vertex set and the edge set are finite sets.
A {\it subgraph} of a graph is a graph whose vertex set and edge set are subsets of the original vertex set and the original edge set.
\end{Def}
The graph is regarded as a $1$-dimensional cell complex.
We correspond a one-point set to each vertex and a closed interval to each edge and attach them in a natural way.
This is an elementary argument. The graph is {\it connected} if it is connected as a topological space.

In the present paper, graphs are finite and connected graphs with at least one edge unless otherwise stated. In this case, it is a $1$-dimensional, connected and compact polyhedron.

\begin{Def}
\label{def:3}
An {\it isomorphism} between two finite graphs is a (PL) homeomorphism between the graphs mapping the vertex set of a graph onto the vertex set of the other graph.
\end{Def}

We introduce the definition of the {\it Reeb space} of a map $c:X \rightarrow Y$ between two topological spaces. Let ${\sim}_c$ be a relation on $X$ defined by the following rule: $x_1 {\sim}_c x_2$ holds if and only if $x_1$ and $x_2$ are in a same connected component of some preimage $c^{-1}(y)$ for $y \in Y$. This is an equivalence relation on $X$.    
\begin{Def}
\label{def:4}
The quotient space $W_c:=X/{\sim}_c$ is the {\it Reeb space} of $c$.
\end{Def}

$q_c:X \rightarrow W_c$ denotes the quotient map onto $W_c$ and we can also define a map denoted by $\bar{c}:W_c \rightarrow Y$ and satisfying the relation $c=\bar{c} \circ q_c$ uniquely.
\begin{Def}
\label{def:5}
Let $(r,s)$ be a pair of non-negative integers satisfying $r>s$ or a pair such that $r=\infty$ and that $s$ is a non-negative integer.

Let $X$ be an $m$-dimensional $C^r$ manifold where $m$ is an integer greater than $1$ and $Y$ be a $1$-dimensional $C^r$ manifold. Assume that there exists a measure zero set $A \subset X$.
The pair $(f,A)$ of a map $f:X \rightarrow Y$ between the $C^r$ manifolds and $A$ is said to be a {\it $(C^r,C^s)$} map if $f$ is of class $C^r$ at any point in $X-A$ and of class $C^s$ at any point in $A$.
$A$ is called the {\it measure zero set} of $(f,A)$. 

Consider the Reeb space $W_f$ of the map $f$. Let $V$ be the set of all points $v \in W_f$ whose preimages ${q_f}^{-1}(v)$ contain some singular points of $f$ or points in $A$. If we can regard $W_f$ as a graph whose vertex set is $V$, then we call the graph the {\it Reeb graph} of $f$.

For the pair $(f,A)$, we omit $A$ and we use $f$ instead if we can guess $A$ easily.
\end{Def}

\section{A pseudo quotient map of a class of maps.}



\begin{Def}
\label{def:6}
Let $r$ be a positive integer or $\infty$. Let $X_1$ and $X_2$ be $C^r$ differentiable manifolds of dimension $m>1$ and $K_1$ and $K_2$ be graphs. 
Two continuous maps $c_1:X_1 \rightarrow K_1$ and $c_2:X_2 \rightarrow K_2$ such that the images are subgraphs of the given graphs are said to be {\it $C^r$-PL equivalent} or $c_1:X_1 \rightarrow K_1$ is {\it $C^r$-PL equivalent} to $c_2:X_2 \rightarrow K_2$ if there exist a $C^r$ diffeomorphism ${\phi}_X$ and an isomorphism ${\phi}_K$ satisfying the relation ${\phi}_K \circ c_1=c_2 \circ {\phi}_X$.
\end{Def}

\begin{Def}
\label{def:7}
Let $m>2$ be a positive integer.
Let $\mathcal{C}$ be a class of $(C^r,C^s)$ maps from $m$-dimensional manifolds into $1$-dimensional ones whose Reeb spaces are regarded as Reeb graphs. A continuous map $q$ on an $m$-dimensional $C^r$ manifold onto a graph $K$ is said to be a {\it pseudo quotient} map of the class $\mathcal{C}$ if the following properties hold.
\begin{itemize}
\item

 At each point $p$ in the interior of an edge $e$, consider a small closed interval $C_p$ containing the point in the interior and regarded as a graph with exactly one edge and two vertices canonically. $q {\mid}_{q^{-1}(C_p)}:q^{-1}(C_p) \rightarrow C_p$ is $C^r$-PL equivalent to a $C^r$ trivial bundle whose base space is a closed interval in the interior of an edge in the Reeb graph of a map of the class $\mathcal{C}$. We regard the base space of the latter trivial bundle as a natural graph with exactly one edge and two vertices. 
\item At each vertex $p$ of $e$, consider a small regular neighborhood $C_p$ containing the point and regarded as a graph with exactly $n(p)$ edges and $n(p)+1$ vertices and having $p$ as a vertex where $n(p)$ is the number of edges containing $p$. $q {\mid}_{q^{-1}(C_p)}:q^{-1}(C_p) \rightarrow C_p$ is $C^r$-PL equivalent to the PL map $q_{f_p} {\mid}_{{q_{f_p}}^{-1}({C^{\prime}}_p)}$ onto a suitable small regular neighborhood ${C^{\prime}}_p$ of a vertex $p^{\prime}$ regarded as a graph having $p^{\prime}$ as a vertex in the Reeb graph of a map $f_p$ of the class $\mathcal{C}$. Here the isomorphism ${\phi}_K$ between the graphs in the situation of Definition \ref{def:6} can be chosen as a map mapping $p$ to $p^{\prime}$.
\end{itemize}  
\end{Def}

In Definition \ref{def:7} vertices of $C_p$, ${C^{\prime}}_p$, and the base space of the $C^r$ trivial bundle in the first condition except the vertex $p$ and the vertex $p^{\prime}$ of the original Reeb graphs are originally in the interiors of edges of the Reeb graphs.


\begin{Def}
\label{def:8}
A pseudo quotient map of a class $\mathcal{C}$ of $(C^r,C^s)$ maps from $m$-dimensional $C^r$ manifolds onto graphs is said to be {\it realized} as a quotient map of the class $\mathcal{C}$ if there exists a map of the original class $\mathcal{C}$ such that the induced quotient map onto the Reeb graph of the map and the original pseudo quotient map are $C^r$-PL equivalent.
\end{Def}

\section{Main theorems and proofs with explanations of terminologies, notions and notation needed.}
We introduce main theorems, which are explicit answers to Main Problem.

We review {\it fold} maps and {\it special generic} maps. 

Let $r$ and $s$ satisfy either of the following three.
\begin{itemize}
\item $r \geq s \geq 0$ and these two numbers are integers.
\item $r=\infty$ and $s \geq  0$ be an integer.
\item $r=s=\infty$.
\end{itemize} 
For two maps $c_1:X_1 \rightarrow Y_1$ and $c_2:X_2 \rightarrow Y_2$ between $C^{r}$ manifolds, they are said to be {\it $C^s$ equivalent} or $c_1$ is {\it $C^{s}$ equivalent} to $c_2$ if there exists a pair $({\phi}_X,{\phi}_Y)$ of $C^s$ diffeomorphisms satisfying ${\phi}_Y \circ c_1=c_2 \circ {\phi}_X$. $c_1$ is {\it $C^s$ equivalent} to $c_2$ {\it around a point} $p_1 \in X_1$ if the following conditions hold.
\begin{itemize}
\item There exists a pair $(U_1,V_1)$ of open subsets of $X_1$ and $Y_1$ respectively.
\item $U_1 \ni p_1$ and $V_1 \supset c_1(U_1) \ni c_1(p_1)$.
\item There exist a pair $(U_2,V_2)$ of open subsets of $X_2$ and $Y_2$ respectively.
\item There exists a point $p_2 \in U_2$.
\item $U_2 \ni p_2$ and $V_2 \supset c_2(U_2) \ni c_2(p_2)$.
\item There exists a pair $({\phi}_{X,p},{\phi}_{Y,p})$ of $C^s$ diffeomorphisms satisfying ${\phi}_{Y,p} \circ c_1=c_2 \circ {\phi}_{X,p}$ such that the following properties hold.
\begin{itemize}
\item ${\phi}_{X,p}$ is a diffeomorphism from $U_1$ onto $U_2$.
\item ${\phi}_{Y,p}$ is a diffeomorphism from $V_1$ onto $V_2$.
\item ${\phi}_{X,p}(p_1)=p_2$.
\end{itemize}
\end{itemize}  

For {\it Morse} functions, see \cite{golubitskyguillemin} and \cite{milnor} for example. The former book respects the viewpoint from the singularity theory and the latter respects applications to algebraic topological or differential topological properties of the manifolds. 

\begin{Def}
A {\it fold} map is a $C^{\infty}$ map around each singular point which is $C^{\infty}$ equivalent to the product map of a Morse function and the identity map on a $C^{\infty}$ manifold. 
\end{Def}
Precise expositions on fold maps are in \cite{golubitskyguillemin} and \cite{saeki} for example. 

Hereafter, ${\mathbb{R}}^k$ denotes the $k$-dimensional Euclidean space endowed with the Euclidean metric. $||x|| \geq 0$ denotes the distance between $x$ and the origin $0$ in ${\mathbb{R}}^k$.

$S^k:=\{x \in {\mathbb{R}}^{k+1} \mid ||x||=1.\}$ is the $k$-dimensional unit sphere where $k$ is a positive integer. A copy or a smooth manifold $C^{\infty}$ diffeomorphic to $S^k$ is a $k$-dimensional {\it standard} sphere. 
$D^k:=\{x \in {\mathbb{R}}^{k} \mid ||x|| \leq 1.\}$ is the $k$-dimensional unit disk where $k$ is a positive integer. A copy or a smooth manifold $C^{\infty}$ diffeomorphic to $D^k$ is a $k$-dimensional {\it standard} disk. 

Morse functions are fold maps. A {\it height} function on (the interior of) a standard disk is one of simplest Morse functions. The function is a Morse function with exactly one singular point in the center of the disk. A {\it height} function on a standard (unit) sphere is also one of simplest Morse functions. This function on a standard sphere is a specific case of Morse functions on spheres with exactly two singular points, playing important roles in so-called Reeb's theorem.
\begin{Def}
\label{def:9}
A {\it special generic} map is a fold map around each singular point which is $C^{\infty}$ equivalent to the product map of a height function of the interior of a standard disk and the identity map on a $C^{\infty}$ manifold. 
\end{Def}
See \cite{saeki2} for special generic maps for example. For an integer $d \geq 1$, let $L_d$ be the $1$-dimensional polyhedron or the graph with $d+1$ vertices represented as $${\bigcup}_{k=0}^{d-1} \{(r \cos \frac{2k\pi}{d}, r \sin \frac{2k\pi}{d}) \mid 0 <r \leq 1 \} \bigcup \{(0,0)\} \subset {\mathbb{R}}^2.$$ where $(0,0)$ is also a vertex.
\begin{figure}
\includegraphics[width=25mm]{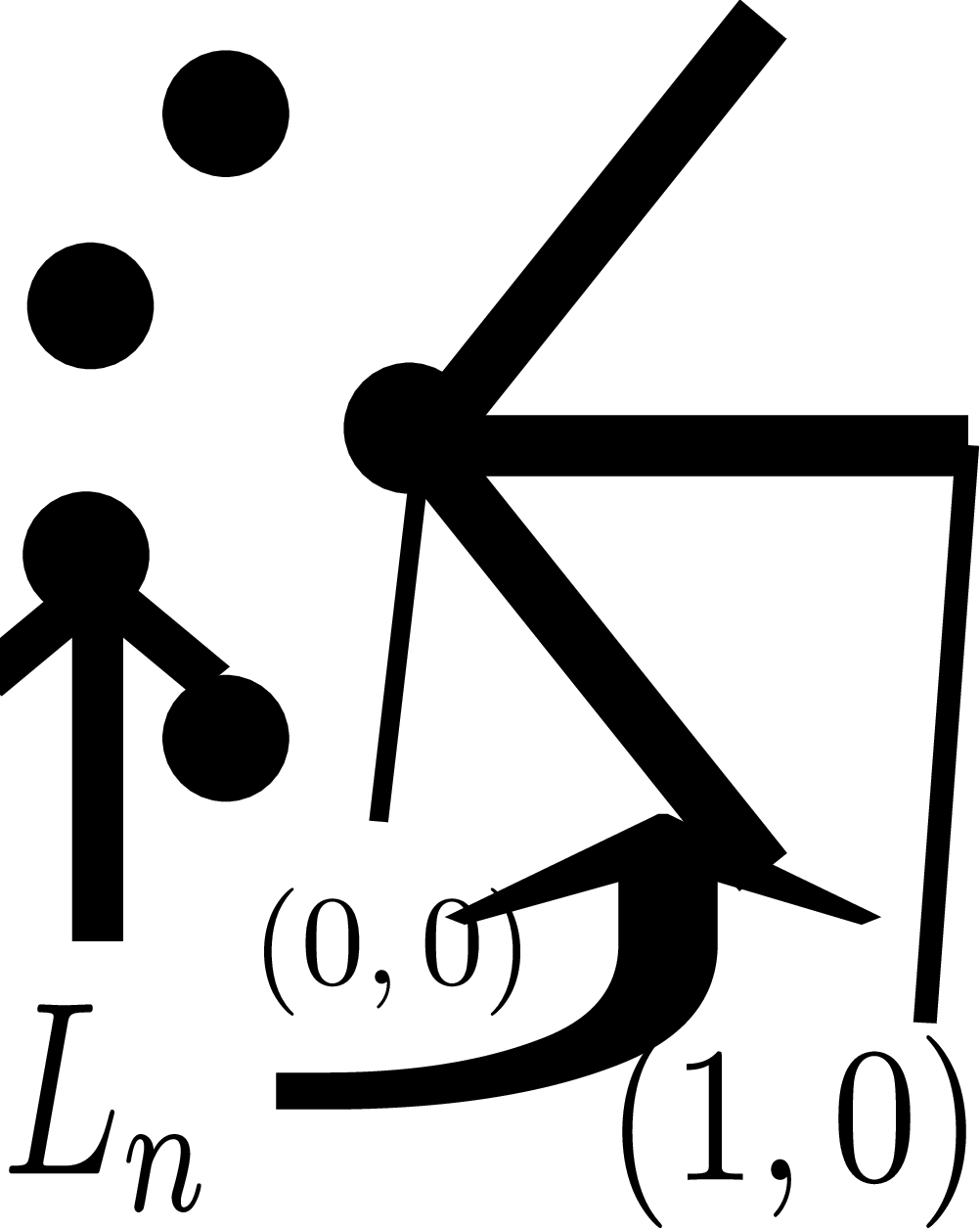}
\caption{$L_n \subset {\mathbb{R}}^2$.}
\label{fig:1}
\end{figure}

\begin{Def}
\label{def:10}
Let $d \geq 1$ be an integer. Let $q$ be a continuous map from a $C^s$ manifold of dimension $m>1$ onto $L_d \subset {\mathbb{R}^2}$. Let $r \geq 0$. $q$ is said to be {\it almost $C^s$ $D_d$-symmetric} if the following two are satisfied.
\begin{itemize}
\item For a transformation ${\phi}_{d}$ on $L_d$ defined as
$${\phi}_{d}(r \cos \frac{2k\pi}{d}, r \sin \frac{2k\pi}{d})=(r \cos \frac{2(k+1)\pi}{d}, r \sin \frac{2(k+1)\pi}{d})$$
there exists a $C^s$ diffeomorphism ${\Phi}_{d}$ satisfying ${\phi}_d \circ q=q \circ {\Phi}_d$.
\item For a transformation ${\phi}_{{\rm re}}$ on $L_d$ defined as $${\phi}_{{\rm re}}(r \cos \frac{2k\pi}{d}, r \sin \frac{2k\pi}{d})=(r \cos \frac{-2k\pi}{d}, r \sin \frac{-2k\pi}{d})$$
there exists a $C^s$ diffeomorphism ${\Phi}_{{\rm re}}$ satisfying ${\phi}_{{\rm re}} \circ q=q \circ {\Phi}_{{\rm re}}$.
\end{itemize}

\end{Def}
\begin{Thm}
\label{thm:1}
There exist a class $\mathcal{C}$ of $(C^{\infty},C^0)$ maps whose Reeb spaces are regarded as Reeb graphs and a class ${\mathcal{Q}}_{\mathcal{C}}$ of pseudo quotient maps of the class satisfying the following properties.
\begin{enumerate}
\item For maps of the class $\mathcal{C}$, preimages of regular values are disjoint unions of standard spheres. Moreover, the restriction of the map of the class to the preimage of a suitable small regular neighborhood of a vertex in the Reeb space is $C^{\infty}$ equivalent to the composition of a $(C^{\infty},C^0)$ map into the plane with a canonical projection to $\mathbb{R}$. 
\item For maps of the class ${\mathcal{Q}}_{\mathcal{C}}$, the restriction of the map to the preimage of a suitable small regular neighborhood of a vertex is an almost $C^{\infty}$ $D_d$-symmetric map onto $L_d$ by regarding the regular neighborhood as a graph consisting of exactly $d$ edges in a canonical way. 
\item For any finite and connected graph with at least one edge, we can construct a map of the class ${\mathcal{Q}}_{\mathcal{C}}$ onto the graph.
\item For a map of the class ${\mathcal{Q}}_{\mathcal{C}}$, if for the graph of the target, the degree of each vertex is at most $3$, then the map is realized 
as a quotient map of the class $\mathcal{C}$.
\end{enumerate} 
\end{Thm}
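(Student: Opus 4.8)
The plan is to exhibit both classes through explicit local models over the stars of vertices and the interiors of edges, to read off assertions (1) and (2) directly from the construction, and then to treat the construction statement (3) and the realization statement (4) by one uniform gluing procedure, the genuine work lying in (4).

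First I would fix the class $\mathcal C$. Its members are $(C^{\infty},C^0)$ functions $f\colon X\to\mathbb R$ on closed $C^\infty$ $m$-manifolds whose regular fibres are disjoint unions of standard $(m-1)$-spheres, which are trivial $S^{m-1}$-bundles over the interior of every edge of the Reeb graph, and which near each vertex are $C^\infty$ equivalent to a $(C^{\infty},C^0)$ map into the plane followed by a linear projection $\mathbb R^2\to\mathbb R$; this is exactly the local form demanded in (1), and it is what permits a vertex of any valence, since in the plane one may place $n$ fold arcs meeting at a single $C^0$ point. For each $n\ge 1$ I construct the corresponding vertex model $q_n$ directly: take $n$ legs $S^{m-1}\times(0,1]$, attach them along a common central fibre in a fashion invariant under the dihedral group $D_n$, and map the $k$-th leg onto the $k$-th leg of $L_n$ by $(x,r)\mapsto r\bigl(\cos\tfrac{2k\pi}{n},\sin\tfrac{2k\pi}{n}\bigr)$. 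The generators $\phi_n$ and $\phi_{\mathrm{re}}$ of $D_n$ then lift to diffeomorphisms $\Phi_n,\Phi_{\mathrm{re}}$ permuting and reflecting the legs, so $q_n$ is almost $C^\infty$ $D_n$-symmetric; for $n\le 3$ the central fibre can be chosen so that $q_n$ is a genuinely smooth fold piece (a Morse extremum for $n=1$, a regular passage marked at one point of $A$ for $n=2$, the $3$-holed sphere $S^m\smallsetminus(3\ \text{balls})$ for $n=3$), while for $n\ge 4$ only the $C^0$ cone is available. Declaring these $q_n$ to be the admissible vertex models and the trivial bundles to be the admissible edge models defines $\mathcal Q_{\mathcal C}$, and (1), (2) hold by design.

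For (3) I would glue. Given a finite connected graph $G$ with at least one edge, cover it by a small star neighbourhood $C_v\cong L_{n(v)}$ of each vertex $v$ and a closed subinterval $C_e$ of each edge meeting the stars only in their outer sphere levels; over $C_v$ place $q_{n(v)}$ and over $C_e$ place the trivial bundle $S^{m-1}\times C_e\to C_e$. On every overlap both pieces restrict to a trivial $S^{m-1}$-bundle over an interval, so they are glued by a $C^\infty$ bundle isomorphism, and because the degree-one stars cap the loose ends the result is a closed $C^\infty$ manifold $X$ together with a surjection $q\colon X\to G$ that is a pseudo quotient map of $\mathcal C$ with the prescribed $D_{n(v)}$-symmetry at each vertex. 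Thus $q\in\mathcal Q_{\mathcal C}$ for every such $G$, cycles included, since the target here is $G$ itself and need not embed in $\mathbb R$.

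The substance is (4). Assume every vertex of $G$ has degree at most $3$. The point is that the valences $1,2,3$ are exactly those produced by the Reeb graph of a Morse-type function with spherical level sets: a degree-one vertex is a local extremum (an index $0$ or $m$ fold), a degree-two vertex is a regular passage marked in $A$, and a degree-three vertex is an indefinite fold splitting one $S^{m-1}$ into two, whose total space is the $3$-holed sphere $S^m\smallsetminus(3\ \text{balls})$ and which is $C^\infty$-PL equivalent to the symmetric model $q_3$ because $C^\infty$-PL equivalence is free to permute the three legs. I would therefore choose a generic PL height $h\colon G\to\mathbb R$, replace each vertex piece by the smooth fold piece compatible with $h$, keep the trivial bundles over the edges, and glue---exactly as in (3)---into a single $(C^{\infty},C^0)$ fold function $f\colon X'\to\mathbb R$ of class $\mathcal C$ whose Reeb graph is $G$ and whose induced quotient map is $C^\infty$-PL equivalent to $q$. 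The hard part is precisely this assembly: one must match the local height directions along all of $G$ into one globally defined $h$ and, at each degree-three vertex, select the split or the merge fold according to the sign pattern of $h$, verifying that the resulting handle attachments fit together into a closed manifold. This is where the bound $3$ is essential, since a vertex of valence $\ge 4$ would force all of its branches to meet at a single critical value, which no smooth fold with spherical fibres can produce; accordingly the theorem asserts realization only under the stated degree hypothesis.
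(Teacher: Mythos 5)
Your overall architecture (dihedrally symmetric local models over vertex stars, trivial sphere bundles over edge interiors, gluing) matches the paper's Steps 1--4, but two steps contain genuine gaps. First, the vertex model for $n\ge 3$: attaching $n$ legs $S^{m-1}\times(0,1]$ ``along a common central fibre'' does not produce a manifold when $n\ge 3$ --- near the centre you would get something like $S^{m-1}\times(\text{cone on }n\text{ points})$. The paper has to work much harder here: it takes $n$ copies of a special generic map into the plane, glues their domains along the $D^{m-1}$ preimages of the two boundary segments through $(0,-1)$, and collapses the planar image onto $L_n$ by an explicit $C^{\infty}$ map $C$; the preimage of the central vertex is then $n$ copies of $D^{m-1}$ identified along their boundaries --- not a fibre of any bundle --- and the total space is a genuine $m$-manifold. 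This construction is the technical heart of properties (2) and (3) and cannot be left unspecified.

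Second, and more seriously, your proof of (4) rests on the claim that the indefinite-fold piece (the $3$-holed sphere) is $C^{\infty}$-PL equivalent to the symmetric model $q_3$ ``because $C^{\infty}$-PL equivalence is free to permute the three legs.'' This is false: the equivalence must be covered by a diffeomorphism of the source carrying connected components of fibres to connected components of fibres, and the singular fibre of an indefinite fold separates its neighbourhood into one connected level component on one side and two on the other, so no such diffeomorphism can cyclically permute the three legs. The paper's own Example on standard-spherical Morse functions says exactly this (``around a vertex of degree $3$, we cannot regard the local map as a $D_3$-symmetric map'') and uses Figure \ref{fig:7} to exhibit a pseudo quotient map of the fold-type class, with all degrees at most $3$, that is \emph{not} realizable. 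Hence swapping the $D_3$-symmetric vertex pieces for fold pieces changes the $C^{\infty}$-PL equivalence class of the quotient map and does not realize the given $q$. The paper's realization argument instead keeps the same $(C^{\infty},C^0)$ vertex models, orients the edges of $G$ so that a map $G\to S^1$ (or $\mathbb{R}$) injective on each edge exists, and post-composes each planar local model with an ``almost smooth generalized rotation with reflections'' followed by a linear projection, so that the resulting function has the given local models verbatim; relatedly, your explanation of the degree bound (that a valence $\ge 4$ vertex cannot be produced by a smooth fold with spherical fibres) is not the operative obstruction in this $C^0$ setting, where class-$\mathcal{C}$ functions do admit Reeb-graph vertices of arbitrary degree.
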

In the proof, we first construct a local function around each vertex in Steps 1, 2 and 3. 
In Step 4, we complete the construction by constructing remaining parts. Last, we give the definitions of $\mathcal{C}$ and $\mathcal{Q}_{\mathcal{C}}$. More rigorously, we only present conditions the classes should satisfy and we can see that this is sufficient to continue our discussions. We see that this completes the proof except the proof of the fourth property. Last we discuss the fourth property.
\begin{proof}
Step 1 Around a vertex of degree $2$. \\
We consider a trivial $C^{\infty}$ bundle over $[-1,1]$ whose fiber is a standard sphere. We compose a surjective function over $[-1,1]$ defined by $t \mapsto t^3$. $0 \in [-1,1]$ is identified as the vertex of degree $2$. We can regard that the remaining points are not in the vertex set. After composing a canonical embedding into ${\mathbb{R}}^2$, the map is regarded as an almost $C^{\infty}$ $D_2$-symmetric map onto $L_2$. \\
\ \\
Step 2 Around a vertex of degree $d \geq 3$. \\
We consider a $C^{\infty}$ map on an $m$-dimensional $C^{\infty}$ manifold into the plane whose image is the closure $D$ of the bounded domain surrounded by two segments and three curves including the curve represented by a parabola in the center in FIGURE \ref{fig:2}. We give expositions on the curve represented by the parabola. This is diffeomorphic to a line and unbounded in ${\mathbb{R}}^2$. 

We construct the $C^{\infty}$ map so that the following properties hold.
\begin{itemize}
\item The restriction to the singular set is an embedding.
\item The singular value set is the disjoint union of the two curves in the left and in the right of the half-space $\{(x_1,x_2)\mid x_2 \geq 0.\}$.
\item The curve represented by the parabola is diffeomorphic to a line and unbounded in ${\mathbb{R}}^2$. We can take a suitable open and connected subset $S_C$ satisfying the following conditions.
\begin{itemize}
\item $S_C$ is a smooth and connected curve bounded in ${\mathbb{R}}^2$.
\item $S_C$ contains the set of all points on the curve represented by the parabola being also in the boundary of $D$. The resulting set is a subset $S_C$. 
\item The restriction of the $C^{\infty}$ map on the $m$-dimensional manifold to the preimage of $S_C$ is $C^{\infty}$ equivalent to a Morse function with exactly two singular points on a $2$-dimensional standard sphere where the manifold of the target is taken as $S_C$.
\end{itemize}
\item Preimages of regular values of the $C^{\infty}$ map are standard spheres ($m>2$) or two-point sets ($m=2$). 
\end{itemize}
We explain about the composition of the map with a diffeomorphism again later.
For more precise facts on special generic maps into the plane, see \cite{saeki2} for example.
\begin{figure}
\includegraphics[width=30mm]{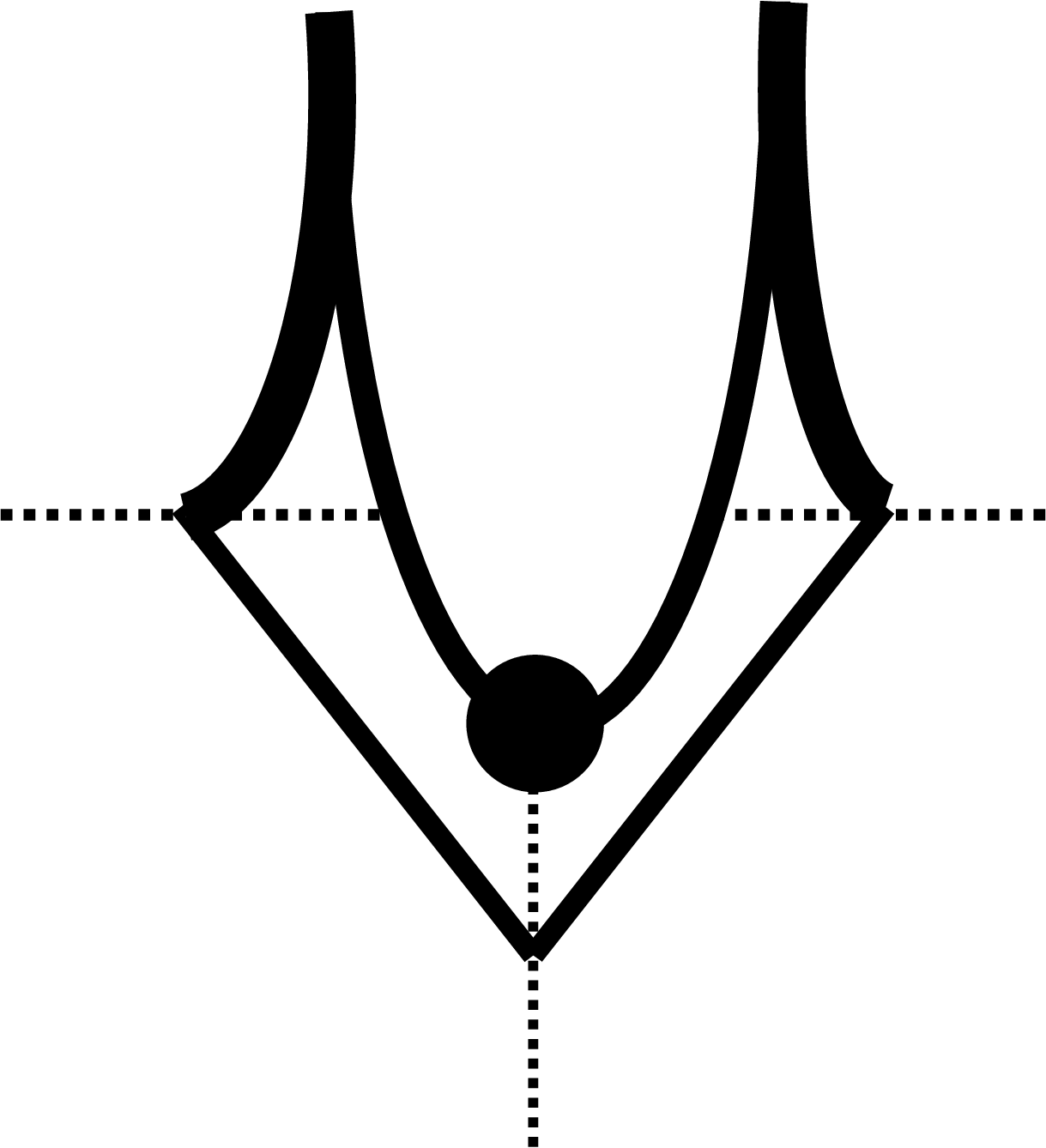}
\caption{The image of a special generic map into the plane.}
\label{fig:2}
\end{figure}
Let $t_0$ be a $C^{\infty}$ function whose value is $1$ on the interval $\{x \leq 0 \mid x \in \mathbb{R}.\}$ and which is strictly increasing on the interval $\{x \geq 0 \mid x \in \mathbb{R}.\}$. 

By setting the original $C^{\infty}$ map into the plane and $t_0$ suitably here, we can have the image of the composition of the original $C^{\infty}$ map with $T$ defined by
$$T(x_1,x_2):=
\begin{cases}
(x_1,x_2) & (x_2 \leq 0) \\
(t_0(x_2)x_1,x_2) & (x_2 \geq 0) 
\end{cases}
$$
as one represented as FIGURE \ref{fig:3}. Let us explain about the new bounded domain $T({\rm Int}\ D)$, the closure $T(D)$ and the resulting smooth map $f_{T,D}$. 
 
$T(D)$ is the closure of the bounded domain surrounded by four segments and one curve depicted in the figure and $p_0>0$ is given. The two segments containing $(0,-1)$ are $\{(\pm(-1+u),-u) \mid 0 \leq u \leq 1\}$. 
The curve, connecting the two thick segments $\{(\pm1,p) \mid 0 \leq p \leq p_0\}$, is defined as a subset of the quadratic curve $(x_2+1)^2-{x_1}^2={(c_0+1)}^2$, containing $(\pm1,p_0)$ for a suitable real number $c_0>-1$. 
The resulting smooth map into the plane satisfies the following properties.
\begin{itemize}
\item The restriction to the singular set is an embedding.
\item The singular value set is the disjoint union of the two thick segments $\{(\pm1,p) \mid 0 \leq p \leq p_0\}$.
\item We can take a suitable open and connected subset $T(S_C)$ which is a curve bounded in ${\mathbb{R}}^2$ and contains the subset, consisting of of all points on the quadratic curve being also in the boundary of $T(D)$ as a subset. We can also do this so that the restriction of the map to the preimage of $T(S_C)$ is $C^{\infty}$ equivalent to a Morse function with exactly two singular points on a $2$-dimensional standard sphere where the manifold of the target is taken as the curve $T(S_C)$. 
\item Preimages of regular values are standard spheres ($m>2$) or two-point sets ($m=2$). 
\item The restrictions to the preimages of the straight lines in ${\mathbb{R}}^2$ containing the two segments $\{(\pm(-1+u),-u) \mid 0 \leq u \leq 1\}$ are $C^{\infty}$ equivalent to a height function on the $2$-dimensional unit disk where the manifolds of the targets are taken as the straight lines.
\end{itemize}

We can determine $c(p_0)=c_0>-1$ by considering the point on the subset of the quadratic curve and we can naturally determine a $C^{\infty}$ function $c$ on $(0,p_0]$ respecting a natural family of quadratic curves ${(x_2+1)}^2-{x_1}^2={(c(p)+1)}^2$ for $-1 \leq c(p) \leq c_0$ by applying similar correspondences mapping $p \in [0,p_0]$ to $c(p)>-1$. $(0,c(p))$ is on the curve ${(x_2+1)}^2-{x_1}^2={(c(p)+1)}^2$ for $p \in (0,p_0]$. We can define a function $c$ mapping $0$ to $-1$ by extending the function on $(0,p_0]$. We can define a $(C^{\infty},C^0)$ map on the closure $T(D)$ of the bounded domain mapping points on the curves ${(x_2+1)}^2-{x_1}^2={(c(p)+1)}^2$ to $(0,c(p))$ and points on the two segments in the bottom to $(0,-1)$. Let $T_D$ denote the map.
The argument yields a $(C^{\infty},C^0)$ function on the given $m$-dimensional manifold to a closed interval $[-1,c(p_0)]$. This is defined by $T_D \circ f_{T,D}$ where we identify $(0,x_2)$ in the plane of the target with $x_2$ for $-1 \leq x_2 \leq c(p_0)$. 

\begin{figure}
\includegraphics[width=30mm]{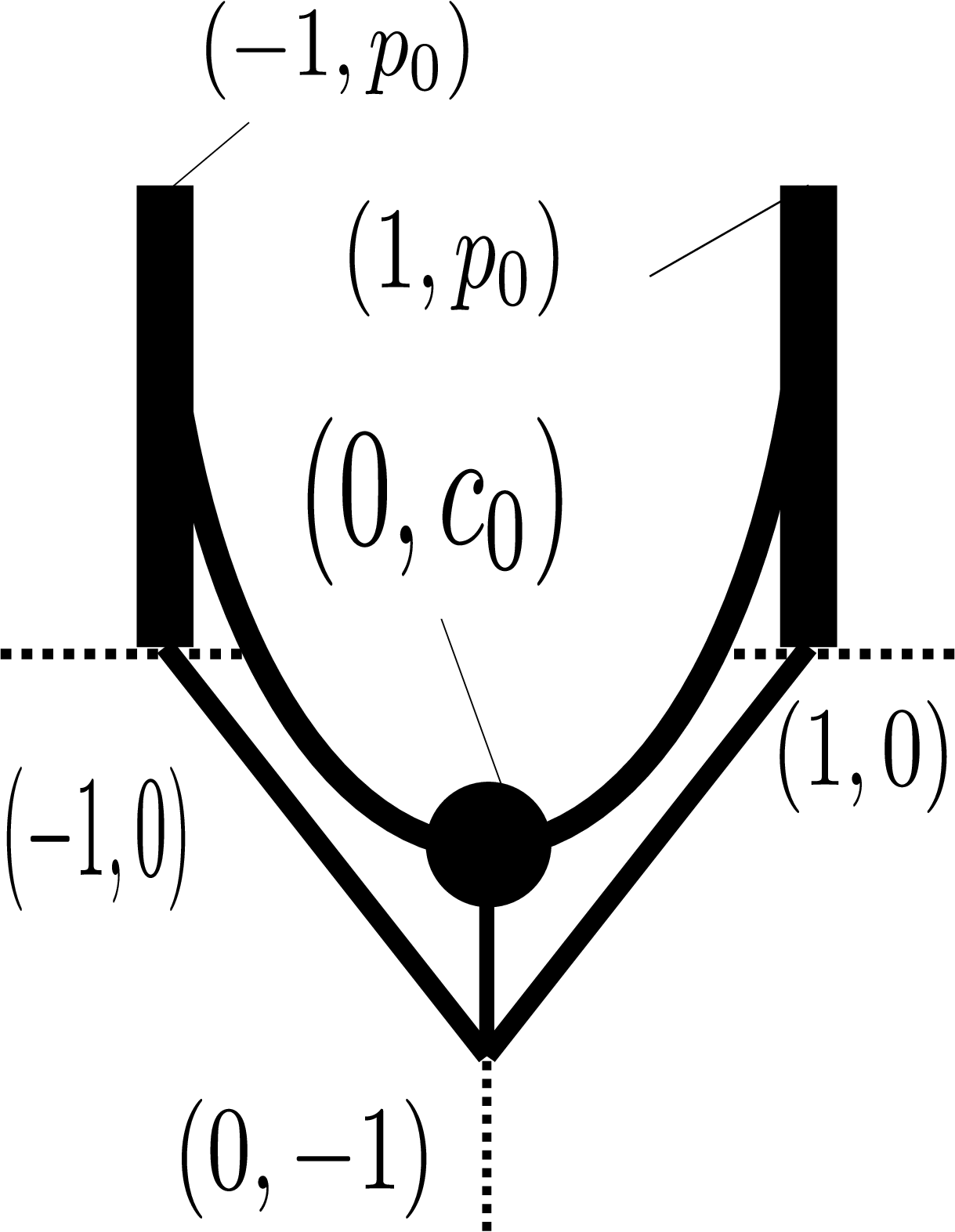}
\caption{The image of a deformation of the special generic map into the plane of FIGURE \ref{fig:2}.}
\label{fig:3}
\end{figure}

We consider the map $f_{T,D}$ on the $m$-dimensional manifold into the plane obtained in the explanation of FIGURE \ref{fig:2} and $d$ copies of this. We deform these maps by scaling suitably and attach these $d$ copies as shown in FIGURE \ref{fig:4} on the segments corresponding to ones including $(0,-1)$ in the original image and the preimages. $D_k$ stands for the images of the maps. Maps are also suitably scaled so that $(0,-1)$ goes to $(0,0)$ and that the angles formed by the pairs of the segments containing $(0,0)$ are equal and $\frac{2\pi}{d}$, for example. We can obtain an almost $C^{\infty}$ $D_d$-symmetric local map around the vertex such that preimages of points in the interiors of edges of the graph are standard spheres by composing maps playing roles $T_D$ has played before for each copy of the map $f_{T,D}$. 

The measure zero set of the resulting local map and the desired map we construct later is defined by taking the preimage of the union of the $d$ segments originating from the origin $0 \in {\mathbb{R}}^2$ and forming the $d$ angles before for each vertex of degree $d \geq 3$ and taking the disjoint union canonically. In Step 3, we construct another local map and this does not enlarge our measure zero set.
\begin{figure}
\includegraphics[width=30mm]{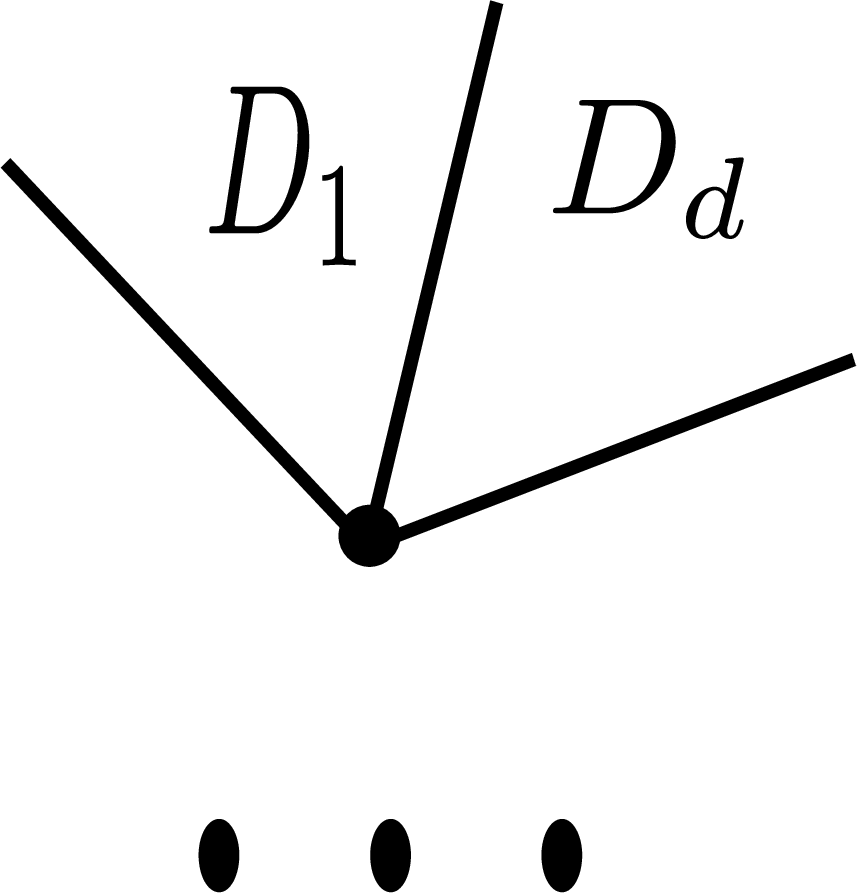}
\caption{Attacing $d$ copies of a map presented in FIGURE \ref{fig:2} (the image of the resulting map).}
\label{fig:4}
\end{figure}

\ \\
Step 3 Around a vertex of degree $1$. \\
We consider a natural height function $h$ on (the interior of) a copy of the $m$-dimensional unit disk whose image is $[0,1]$. The function is also a Morse function with exactly one singular point, which is the origin and in the center of the unit disk. The local map $q_h$ is easily seen as an almost $C^{\infty}$ $D_1$-symmetric map after composing a canonical embedding into ${\mathbb{R}}^2$. \\
\ \\
Step 4 Completing the construction. \\
For the interior of each edge, we construct a trivial $C^{\infty}$ bundle whose fiber is a standard sphere. Last we glue all the constructed local maps together to obtain a global map. More rigorously, we need to compose the resulting local map with a suitable PL homeomorphism onto a small regular neighborhood of each vertex in Steps 1--3 before gluing the local maps.  \\
\ \\
Step 5 Expositions on $\mathcal{C}$ and $\mathcal{Q}_{\mathcal{C}}$. \\
Through Steps 1--4, we construct a desired map for arbitrary finite and connected graph with at least one edge. Last, we explain about the classes $\mathcal{C}$ and $\mathcal{Q}_{\mathcal{C}}$. 

We first explain about $\mathcal{C}$. 
More precisely, we explain about conditions maps in this class should satisfy. This is sufficient to continue our discussions.
The restriction of the map in this class to the preimage of a suitable small regular neighborhood of a vertex of degree greater than $1$ is $C^{\infty}$ equivalent to a map obtained in the following way.
\begin{itemize}
\item Prepare a presented local map onto a regular neighborhood of a graph, regarded as a map into the plane where we compose the original map with the canonical embedding of the graph into the plane.
\item Compose the previous map with a homeomorphism on the plane satisfying the following properties (we define such a map as an {\it almost smooth generalized rotation with reflections}).
\begin{itemize}
\item The homeomorphism is $C^{\infty}$ on ${\mathbb{R}}^2-\{(0,0)\}$.
\item Each point expect $(0,0)$ is not a singular point of the restriction of the homeomorphism to ${\mathbb{R}}^2-\{(0,0)\}$.
\item For each point except $(0,0)$, the homeomorphism preserves the distance between the point and $(0,0)$.
\item The homeomorphism maps each straight line originating from $(0,0)$ to another straight line originating from $(0,0)$. 
\end{itemize}   
\item Compose the previous map with a canonical projection onto a straight line containing $(0,0)$ and intersecting no edges in the regular neighborhood of the graph of the target vertically.
\end{itemize}
Around the preimage of each vertex of degree $1$, a map in the class is a map such that the local form around the vertex is as in Step 3 or a natural height function on a unit disk. This completes the exposition on the class $\mathcal{C}$ and the proof except the proof of the fourth property.

For each finite graph which is not a single point or which has no vertices of degree greater than $3$, we can give an orientation to each edge of the graph so that we can construct a continuous map from the graph into $S^1$ satisfying the following two.
\begin{itemize}
\item On each edge the map is injective.
\item The orientation of each edge canonically induced from a canonical orientation of $S^1$ coincides with the given orientation. 
\end{itemize}
If the graph has no loop, then we can replace $S^1$ by $\mathbb{R}$.

For a map of the class $\mathcal{Q}_{\mathcal{C}}$, if for the graph of the target and each vertex, the degree is at most $3$, then we can orient the graph as this and can construct a local function respecting the definition of the class $\mathcal{C}$ and the orientations of edges. This is due to the definitions of a $D_2$-symmetric and a $D_3$-symmetric map and an almost smooth generalized rotation with reflections. We can consider a transformation by an almost smooth generalized rotation with reflections to construct a local function compatible with the desired orientations of the edges. See FIGURE \ref{fig:4.5} for the case of a vertex of degree $2$ for example. We can glue local functions to obtain a desired function.

\begin{figure}
\includegraphics[width=30mm]{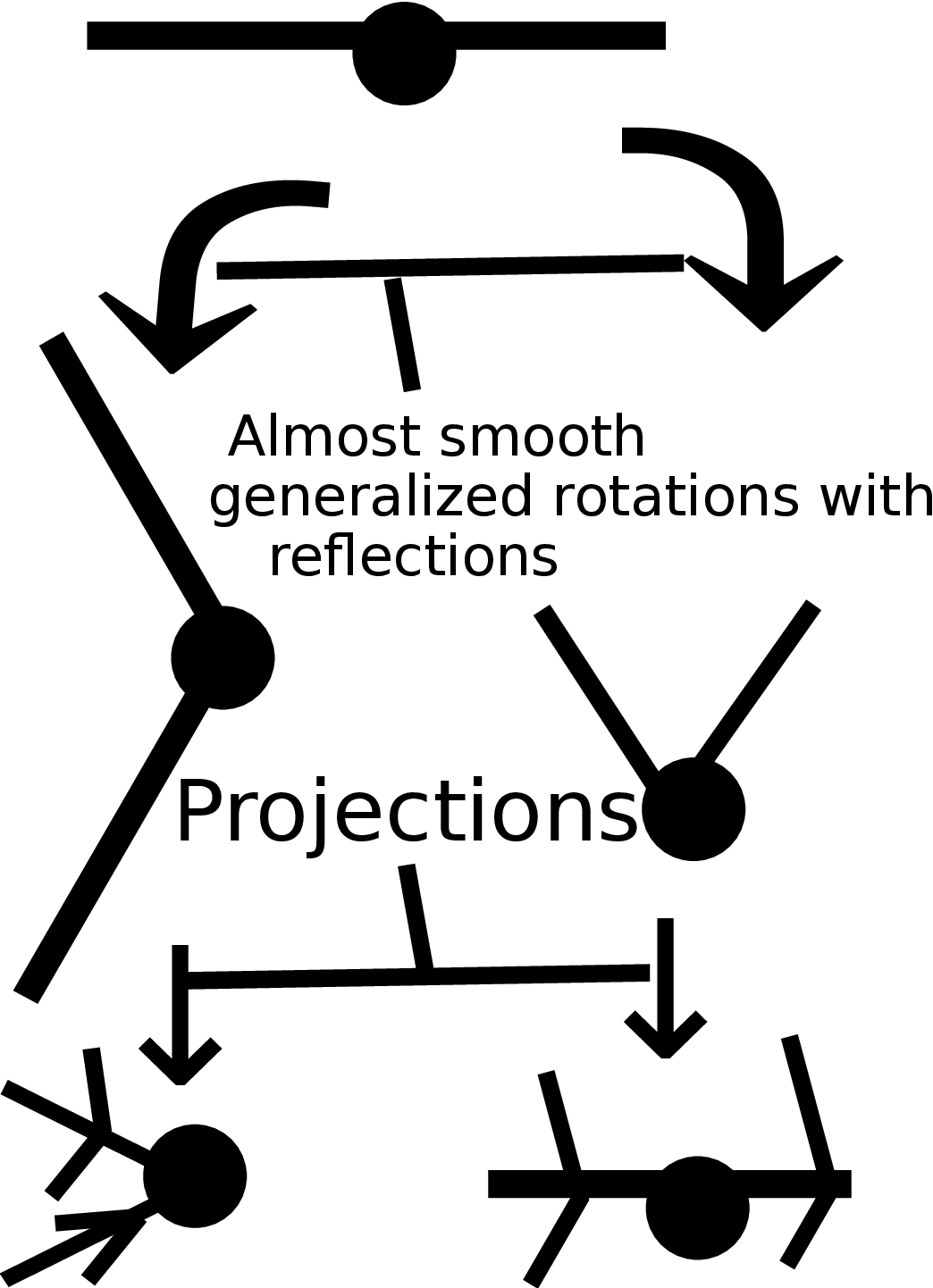}
\caption{Graphs of the targets of maps and almost smooth generalized rotations with reflections and canonical projections (around a vertex of degree $2$: note that the graphs in the bottom are the figure representing Reeb graphs of the local functions locally and that arrows indicate canonical local orientations of the graphs induced naturally from the local functions respecting the values).}
\label{fig:4.5}
\end{figure}

This completes the proof.

\end{proof}
We present another example of classes of maps and pseudo quotient maps of the class. 
\begin{Ex}
A {\it standard-spherical} Morse function is a Morse function such that the following properties hold (\cite{kitazawa}).
\begin{itemize}
\item At distinct singular points the (singular) values are distinct.
\item Preimages of regular values are disjoint unions of finite copies of standard spheres.
\item A vertex of the Reeb graph such that the preimage contains a singular point at which the function does not have a local extremum is a vertex of degree $3$.
\end{itemize}
We consider pseudo quotient maps of the class of such functions. We regard these functions as $(C^{\infty},C^s)$ functions whose measure zero sets are empty.

We present local forms of these pseudo quotient maps with several preimages in FIGURE \ref{fig:5}. 
\begin{figure}
\includegraphics[width=30mm]{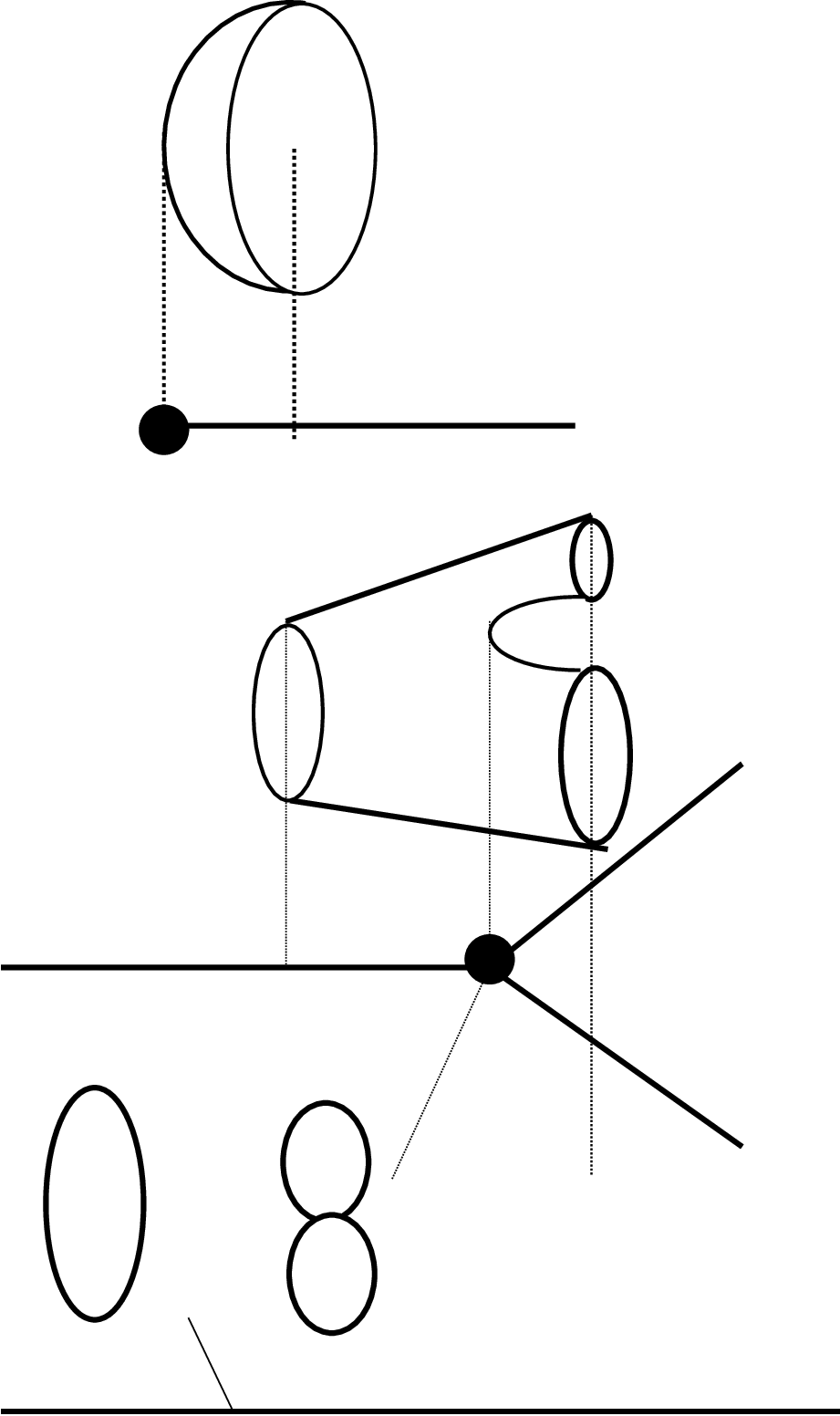}
\caption{Local forms of the pseudo quotient maps with several preimages.}
\label{fig:5}
\end{figure}

We investigate the local form around a vertex of degree $3$ of a pseudo quotient map of the class of the functions and the preimage. Consider an arbitrary small regular neighborhood (of the one-point set) of the only one singular point, which is also in the preimage of the vertex. 
See also FIGURE \ref{fig:6}.

\begin{figure}
\includegraphics[width=15mm]{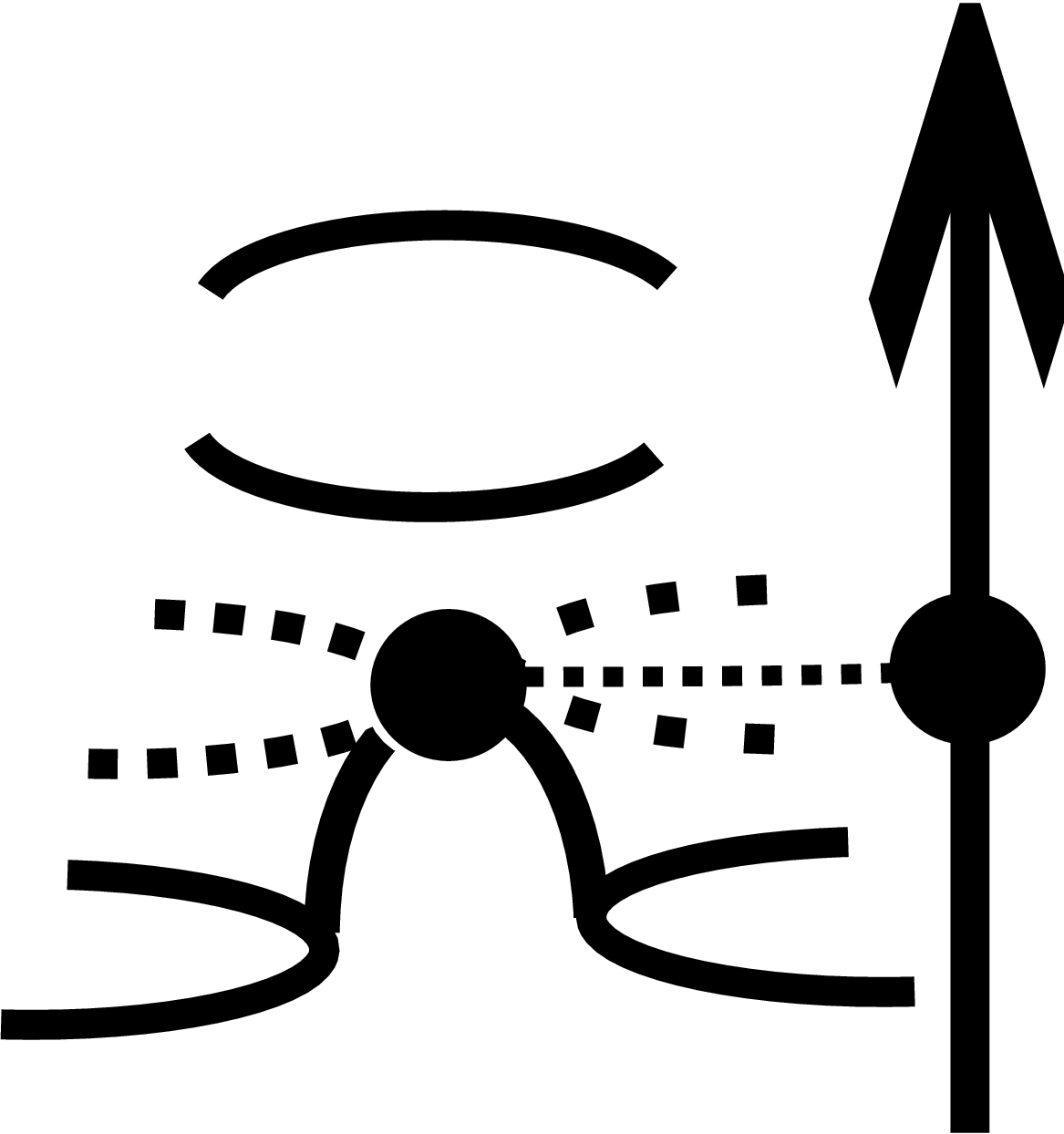}
\caption{The local form around a vertex of degree $3$ of a pseudo quotient map of the class of standard-spherical functions. An arbitrary regular neighborhood of (the one-point set consisting of) the only one singular point is depicted.}
\label{fig:6}
\end{figure}

If we remove the intersection of the preimage of the vertex and the arbitrary small regular neighborhood of (the one-point set consisting of) the only one singular point from the small regular neighborhood, then the resulting space has exactly $4$ connected components. Two of the connected components are in the upper part and the others are in the lower part. Moreover, the former two connected components are mapped onto an interval of the form $(a_{\rm vertex},a_{\rm up}]$ and the latter two connected components are mapped onto the disjoint union of two intervals of the form $[a_{\rm low},a_{\rm vertex})$ in the graph by the quotient map to the Reeb space.

This yields the fact that a pseudo quotient map the graph of whose target is as FIGURE \ref{fig:7} cannot be realized as a quotient map of the class. Arrows indicate natural orientations induced from canonically obtained local functions respecting their values. Note also that around a vertex of degree $3$, we cannot regard the local map as an almost $C^s$ $D_3$-symmetric map for any integer $s$ or $s=\infty$.

\begin{figure}
\begin{center}
\includegraphics[width=10mm]{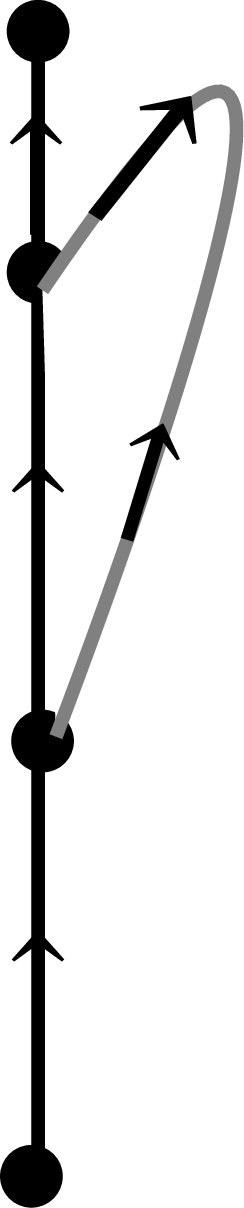}
\end{center}
\caption{A graph representing the graph of the target of a pseudo quotient map of the class of standard-spherical functions. Arrows represent orientations on edges of the graph canonically induced from local functions respecting their values. We can not give a suitable orientation to the curved edge.}
\label{fig:7}
\end{figure}

Local forms of these standard-spherical functions are also discussed in \cite{saeki3} for example.

Last, compare this case with Theorem \ref{thm:1}.
\end{Ex}

\begin{Thm}
\label{thm:2}
Let $m>2$ be a positive integer.
Let $\mathcal{C}$ be a class of $(C^r,C^0)$ maps from $m$-dimensional differentiable manifolds onto $1$-dimensional ones whose Reeb spaces are regarded as Reeb graphs. Moreover, the restriction of the map to the preimage of a suitable small regular neighborhood of each vertex is $C^r$ equivalent to a map obtained by composing the following three maps in order.
\begin{itemize}
\item The $(C^{r},C^{0})$ map itself into the plane whose image is $L_n$ as FIGURE \ref{fig:1} for some $n \geq 1$.
\item An almost smooth generalized rotation with reflections.
\item A canonical projection onto a straight line containing the origin $(0,0)$ and containing no other point in edges of the graph.
\end{itemize}
In this situation, there exists a class $\mathcal{C^{\prime}}$ of $(C^r,C^0)$ maps from $m$-dimensional differentiable manifolds into $1$-dimensional ones equal to or greater than $\mathcal{C}$ such that the following properties hold.
\begin{enumerate}
\item
\label{thm:2.1}
The Reeb spaces are regarded as Reeb graphs.
\item
\label{thm:2.2}
A pseudo quotient map of the original class is also of this class and the converse holds.
\item 
\label{thm:2.3}
A pseudo quotient map of this class is always realized 
as a quotient map of the class $\mathcal{C^{\prime}}$.
\item
\label{thm:2.4}
For any map of this class, on the preimage of a suitable small regular neighborhood of each vertex in the Reeb graph, it is $C^r$ equivalent to a map obtained by composing the following three maps in order.
\begin{itemize}
\item The $(C^{r},C^{0})$ map itself into the plane whose image is $L_n$, explained in the assumption
\item A $(C^{\infty},C^0)$ map from the plane into the ${\mathbb{R}}^3$.
\item A canonical projection onto $\mathbb{R}$, defined by ${\pi}_{3,1}(x_1,x_2,x_3):=x_3$. 
\end{itemize}
\end{enumerate}
\end{Thm}
\begin{proof}
On the preimage of a small and connected open neighborhood of each vertex, the map of the class $\mathcal{C}$ is represented as or $C^r$ equivalent to a map obtained by composing the following three maps in order.
\begin{itemize}
\item A $(C^r,C^0)$ map into the plane explained in the assumption whose image is $L_n$ as FIGURE \ref{fig:1}.
\item An almost smooth generalized rotation with reflections.
\item The canonical projection onto a straight line containing the origin $(0,0)$ and containing no other points in the graph of the target. 
\end{itemize}
We revise this for the desired class $\mathcal{C^{\prime}}$. First we replace "an almost smooth generalized rotation with reflections" by a "$(C^{\infty},C^0)$ map into ${\mathbb{R}}^2 \times \mathbb{R}$ whose measure zero set is $\{(0,0)\}$". We explain this map in the last. Second, we replace "canonical projection onto a straight line containing the origin $(0,0)$ and containing no other points in the graph of the target" by "canonical projection from ${\mathbb{R}}^2 \times \mathbb{R}$ onto $\mathbb{R}$, defined by ${\pi}_{3,1}(x_1,x_2,x_3):=x_3$".

We present a $(C^{\infty},C^0)$ map from the plane into ${\mathbb{R}}^2 \times \mathbb{R}$ first. We consider an arbitrary map $l_{d,1,-1}$ from the set of all integers from $1$ to $d>0$ into $\{-1,1\}$. We can define a desired map $e$ satisfying the following properties.
\begin{itemize}
\item $e((0,0))=((0,0),0) \in {\mathbb{R}}^2 \times \mathbb{R}={\mathbb{R}}^3$.
\item $e((r \cos \frac{2(k+1)\pi}{d}, r \sin \frac{2(k+1)\pi}{d}))=((r \cos \frac{2(k+1)\pi}{d}, r \sin \frac{2(k+1)\pi}{d}),l_{d,1,-1}(k+1)r)$ for each integer $0 \leq k<d$ and $r>0$.
\item $\{(0,0)\}$ can be taken as the measure zero set of $e$. 
\end{itemize}
This is regarded as a piecewise smooth (PL) embedding. 

The first desired property (\ref{thm:2.1}) is obvious and this with the properties of the canonical projection yields the second desired property (\ref{thm:2.2}).

This yields a desired local function yielding natural orientations to edges of the Reeb graph. Moreover, we can give arbitrary orientations to the edges of the graph by choosing a suitable map $l_{d,1,-1}$. Remember the proof of the last property of Theorem \ref{thm:1}. We can construct a continuous map from the graph into $S^1$ satisfying the following two if we give orientations to the edges in some suitable way.
\begin{itemize}
\item On each edge the map is injective.
\item The orientation of each edge canonically induced from a canonical orientation of $S^1$ coincides with the given orientation. 
\end{itemize}

If the graph has no loop, then we can replace $S^1$ by $\mathbb{R}$.
By discussions similar to some discussions on a class of $(C^\infty,C^0)$ maps in the proof of Theorem \ref{thm:1}, we can see that the new class $\mathcal{C^{\prime}}$ can be regarded as a desired class satisfying the four desired properties. This completes the proof.
\end{proof}
To $\mathcal{C}$ in Theorem \ref{thm:1}, we can apply this for example. 

%
  

\begin{Rem}
\label{rem:1}
We consider {\it pseudo quotient} maps to not only graphs but also general polyhedra similarly. We can define notions similarly in these cases. 
In \cite{kobayashisaeki} such notions are already introduced for smooth maps of some classes on closed $C^{\infty}$ manifolds whose dimensions are greater than $2$ into the plane.

It is well-known that so-called {\it {\rm (}$C^{\infty}${\rm )} stable} maps form a large class of so-called {\it generic} $C^{\infty}$ maps between $C^{\infty}$ manifolds. We omit the definitions and for the definitions and fundamental theory see \cite{golubitskyguillemin} for example. A $C^{\infty}$ function on a closed $C^{\infty}$ manifold is known to be stable if and only if it is a Morse function such that at distinct singular points, the (singular) values are distinct. Most of functions in the present paper are not ($C^{\infty}$) stable.
  
In \cite{kobayashisaeki}, for a stable map on closed $C^{\infty}$ manifolds whose dimensions are greater than $2$ into the plane, the Reeb space is shown to be a $2$-dimensional polyhedron. \cite{shiota} generalizes the result in a more sophisticated way. Moreover, a problem similar to ones solved in the present paper is considered in \cite{kobayashisaeki}. More precisely, a pseudo quotient map (satisfying several conditions) of a suitable class of {\it stable} maps from closed $C^{\infty}$ manifolds whose dimensions are greater than $2$ into the plane is shown to be realized as a quotient map of this class of stable maps in Proposition 5.2 there.
\end{Rem}

\begin{Rem}
\label{rem:2}
We do not know whether we can construct desired functions in Theorem \ref{thm:1} and Theorem \ref{thm:2} as $C^{\infty}$ functions. This requires us to introduce $(C^r,C^s)$ maps as new notions. 
\end{Rem}

\section{Acknowledgement.}
The author would like to thank Professor Irina Gelbukh for comments on the preprints \cite{kitazawa4} and \cite{kitazawa5} by the author. Her comments and discussions with Osamu Saeki closely related to \cite{saeki4} and the two articles have mainly motivated the author to continue new studies on these papers including the present one.

The author is a member of JSPS KAKENHI Grant Number JP17H06128 "Innovative research of geometric topology and singularities of differentiable mappings"
(https://kaken.nii.ac.jp/en/grant/KAKENHI-PROJECT-17H06128/: Principal Investigator is Osamu Saeki). This work is conducted supported by this project. We declare that all data supporting the present study are in the present paper.



\begin{thebibliography}{25}

\bibitem{batistacostamezasarmiento} E. B. Batista, J. C. F. Costa and I. S. Meza-Sarmiento, \textsl{Topological classification of circle-valued simple Morse-Bott functions}, Journal of Singularities, Volume 17 (2018), 388--402.
\bibitem{bott} R. Bott, \textsl{Nondegenerate critical manifolds}, Ann. of Math. 60 (1954), 248--261.
\bibitem{golubitskyguillemin} M. Golubitsky and V. Guillemin, \textsl{Stable Mappings and Their Singularities}, Graduate Texts in Mathematics (14), Springer-Verlag(1974).
\bibitem{kitazawa} N. Kitazawa, \textsl{Lifts of spherical Morse functions}, submitted to a refereed journal, arxiv:1805.05852.
\bibitem{kitazawa2} N. Kitazawa, \textsl{Generalizations of Reeb spaces of special generic maps and applications to a problem of lifts of smooth maps}, arxiv:1805.07783.
\bibitem{kitazawa3} N. Kitazawa, \textsl{A new explicit way of obtaining special generic maps into the $3$-dimensional Euclidean space}, arxiv:1806:04581.
\bibitem{kitazawa4} N. Kitazawa, \textsl{On Reeb graphs induced from smooth functions on $3$-dimensional closed orientable manifolds with finite singular values}, accepted for publication in Topol. Methods in Nonlinear Anal. after refereeing processes, arxiv:1902.08841.
\bibitem{kitazawa5} N. Kitazawa, \textsl{On Reeb graphs induced from smooth functions on closed or open surfaces}, resubmitted to a refereed journal, arxiv:1908.04340.
\bibitem{kobayashisaeki} M. Kobayashi and O. Saeki, \textsl{Simplifying stable mappings into the plane from a global viewpoint}, Trans. Amer. Math. Soc. 348 (1996), 2607--2636.
\bibitem{martinezalfaromezasarmientooliveira} J. Martinez-Alfaro, I. S. Meza-Sarmiento and R. Oliveira, \textsl{Topological classification of simple Morse Bott functions on surfaces}, Contemp. Math. 675 (2016), 165--179.
\bibitem{masumotosaeki} Y. Masumoto and O. Saeki, \textsl{A smooth function on a manifold with given Reeb graph}, Kyushu J. Math. 65 (2011), 75--84.
\bibitem{michalak} L. P. Michalak, \textsl{Realization of a graph as the Reeb graph of a Morse function on a manifold}. Topol. Methods in Nonlinear Anal. 52 (2) (2018), 749--762, arxiv:1805.06727.
\bibitem{milnor} J. Milnor, \textsl{Lectures on the h-cobordism theorem}, Math. Notes, Princeton Univ. Press, Princeton, N.J. 1965.
\bibitem{reeb} G. Reeb, \textsl{Sur les points singuliers d\'{}une forme de Pfaff compl\'{e}tement int\`{e}grable ou d\'{}une fonction num\'{e}rique}, Comptes Rendus
 Hebdomadaires des S\'{e}ances de I\'{}Acad\'{e}mie des Sciences 222 (1946), 847--849.
\bibitem{saeki} O. Saeki, \textsl{Notes on the topology of folds}, J. Math. Soc. Japan Volume 44, Number 3 (1992), 551--566.
\bibitem{saeki2} O. Saeki, \textsl{Topology of special generic maps of manifolds into Euclidean spaces}, Topology Appl. 49 (1993), 265--293.
\bibitem{saeki3} O. Saeki, \textsl{Topology of singular fibers of differentiable maps}, Lecture Notes in Math., Vol. 1854, Springer-Verlag, 2004. 
\bibitem{saeki4} O. Saeki, \textsl{Reeb spaces of smooth functions on manifolds}, Intermational Mathematics Research Notices, maa301, https://doi.org/10.1093/imrn/maa301, arxiv:2006.01689.
\bibitem{sharko} V. Sharko, \textsl{About Kronrod-Reeb graph of a function on a manifold}, Methods of Functional Analysis and
 Topology 12 (2006), 389--396.
\bibitem{shiota} M. Shiota, \textsl{Thom's conjecture on triangulations of maps}, Topology 39 (2000), 383--399. 

\end{thebibliography}
\end{document}